\documentclass{amsart}
\usepackage{amsmath,amssymb,amsfonts}
\usepackage{amsxtra}
\usepackage{mathtools}
\usepackage{stmaryrd}
\usepackage[all]{xy}
\usepackage[normalem]{ulem}
\usepackage{color}
\usepackage[colorlinks=true]{hyperref}

\DeclareMathOperator{\rank}{rank}
\DeclareMathOperator{\discr}{discr}
\DeclareMathOperator{\trace}{tr}
\DeclareMathOperator{\Aut}{Aut}

\DeclareMathOperator{\Orth}{O}

\DeclareMathOperator{\id}{id}
\DeclareMathOperator{\Hom}{Hom}
\DeclareMathOperator{\Pic}{Pic}

\DeclareMathOperator{\Tors}{T}

\DeclareMathOperator{\diff}{d}
\DeclareMathOperator{\Sym}{S}
\DeclareMathOperator{\Trans}{T}

\newcommand{\ii}{{\rm i}}
\newcommand{\dual}[1]{{#1}\spcheck}
\newcommand{\abs}[1]{|{#1}|}
\newcommand{\symm}[2]{{#2}^{(#1)}}
\newcommand{\hilb}[2]{{#2}^{[#1]}}
\newcommand{\ps}[2]{#2^{\llbracket#1\rrbracket}}

\newcommand{\HH}{{\rm H}}

%%%%%%%%%%%%%%%%%%%%%%%%%%%%%%

\newcommand{\IC}{\mathbb{C}}
\newcommand{\IP}{\mathbb{P}}
\newcommand{\IR}{\mathbb{R}}
\newcommand{\IQ}{\mathbb{Q}}
\newcommand{\IZ}{\mathbb{Z}}

%%%%%%%%%%%%%%%%%%%%%%%%%%%%%

\newcommand{\cL}{\mathcal{L}}
\newcommand{\cO}{\mathcal{O}}

%%%%%%%%%%%%%%%%%%%%%%%%%%%%%

%%%%%%%%%%%%%%%%%%%%%%%%%%%%%%

\newcommand{\nn}{\mathfrak{n}}

%%%%%%%%%%%%%%%%%%%%%%%%%%%%%%

%%%%%%%%%%%%%%%%%%%%%%%%%%%%%

\newcommand{\genKm}{\ps n A}
\newcommand{\hilbA}{\hilb n A}

%%%%%%%%%%%%%%%%%%%%%%%%%%%%%
\newtheorem{theorem}{Theorem}[section]

\newtheorem{proposition}[theorem]{Proposition}
\newtheorem{corollary}[theorem]{Corollary}

\theoremstyle{definition}

\newtheorem{example}[theorem]{Example}

\theoremstyle{remark}
\newtheorem{remark}[theorem]{Remark}

\numberwithin{equation}{section}

%%%%%%%%%%%%%%%%%%%%%%%%%%%%%

\begin{document}

\title[Hyper-K\"ahler fourfold symmetries]
{Some algebraic and geometric properties of hyper-K\"ahler fourfold symmetries}

\author{Samuel Boissi\`ere}

\address{Samuel Boissi\`ere,
Laboratoire de Math\'ematiques et Applications,
UMR 7348  CNRS,
B\^atiment H3,
Boulevard Marie et Pierre Curie,
Site du Futuroscope,
TSA 61125,
86073 Poitiers Cedex 9,
France}
			
\email{samuel.boissiere@math.univ-poitiers.fr}

\urladdr{http://www-math.sp2mi.univ-poitiers.fr/$\sim$sboissie/}

\author{Marc Nieper-Wi{\ss}kirchen}

\address{Marc Nieper-Wi{\ss}kirchen, Lehrstuhl f\"ur Algebra und Zahlentheorie, Universit\"ats\-stra{\ss}e~14, 86159 Augsburg, Germany}
			
\email{marc.nieper-wisskirchen@math.uni-augsburg.de}

\urladdr{http://www.math.uni-augsburg.de/alg/}

\author{K\'evin Tari}

\address{K\'evin Tari, Lyc\'ee Henri Moissan, 20 Cours de Verdun, 77100 Meaux}
			
\email{kevin.tari@free.fr}

\date{\today}

\subjclass[2010]{14C05; 14J50}

\keywords{Integral lattice, isometry, irreducible holomorphic symplectic manifold, generalized Kummer variety, abelian surface, automorphism}

\begin{abstract}
We  complete the classification of order $5$ nonsymplectic automorphisms on hyper-K\"ahler fourfolds deformation equivalent to the Hilbert square of a K3 surface. We then compute the topological Lefschetz number of natural automorphisms of generalized Kummer fourfolds and we describe the geometry of their fix loci.
\end{abstract}

\maketitle

\section{Introduction}

Hyper-K\"ahler manifolds, or equivalently irreducible holomorphic symplectic (IHS) manifolds, are traditionally labelled by their deformation type, which encodes
most of the significant features of the manifold. Currently, four deformation types have been exhibited, named after a representative of each class: the Hilbert scheme of $n$~points on a K3 surface, for any $n\geq 1$; the $n$-th generalized Kummer variety of an abelian surface, for any $n\geq 2$; the O'Grady variety OG6; the O'Grady variety OG10. The present paper focuses on hyper-K\"ahler fourfolds of Hilbert and of Kummer type, and the topic of interest concerns the classification of the biholomorphic automorphisms of these manifolds. 

As for K3 surfaces, the approach for the classification of automorphisms of IHS manifolds is twofold: first a lattice-theoretical classification of the invariant sublattice and of its orthogonal complement inside the second cohomology space with integer coefficients, endowed with the Beauville--Bogomolov--Fujiki quadratic form; then a description of the fix locus of the automorphism which, whenever it is nonempty, is smooth and has usually several connected components of different dimensions.

In the last years, many authors have contributed to the classification of prime order automorphisms on the two families of IHS manifolds in issue here.
We refer to~\cite[\S 1]{BCS_class},\cite[\S 1]{MTW_Aut} and references therein for a more detailed introduction of the contribution of each author. So far the classification of order $5$ nonsymplectic automorphisms on the family of the Hilbert square of a K3 surface had still remained incomplete due to the lack of an algebraic ingredient (Theorem~\ref{th:square}).
Using this result, in Theorem~\ref{th:class} we complete the classification of order $5$ nonsymplectic automorphisms on the deformation class of the Hilbert square of a K3 surface.

In the second part of this note, we describe the fix locus of  \emph{natural} automorphisms on generalized Kummer fourfolds, which are the automorphisms coming from the underlying abelian surface. We first give in Proposition~\ref{prop:formule} a generating formula for the topological Lefschetz number of these automorphisms. A similar formula already appeared in~\cite{BNWS_Enriques} but the formula given there is not correct when the automorphism contains a nontrivial translation. We correct  the formula here and we give a complete proof, which requires substantial work to take care of  the missing factor.
As an application, we discuss in Section~\ref{ss:appl} the fix loci of natural automorphisms on generalized Kummer fourfolds, whose action on cohomology has prime order.

\subsection*{Acknowledgements}
The authors warmly thank Simon Brandhorst, Chiara Camere, Paolo Menegatti and Alessandra Sarti for helpful comments, enlightening discussions and precious help during the preparation of this work. We express all our gratitude to the anonymous referee for their very careful reading and valuable observations.

\section{Prime order isometries of integral lattices}\label{s:prime}

A \emph{lattice}~$L$ is a free $\IZ$-module equiped with a nondegenerate symmetric bilinear form $\langle\cdot,\cdot\rangle_L$ with integer values. Its \emph{dual lattice} is $L^\vee\coloneqq\Hom_\IZ(L,\IZ)$. Clearly $L$~is a sublattice of $L^\vee$ of equal rank, so the \emph{discriminant group}~$D_L\coloneqq L^\vee/L$ is a finite abelian group, whose order~$\discr(L)$ is called the \emph{discriminant} of~$L$. The lattice~$L$ is called \emph{unimodular} if $\discr(L)=1$. A sublattice $M\subset L$ is called \emph{primitive} if the quotient~$L/M$ is a free $\IZ$-module; it is called $p$-\emph{elementary} for some prime number~$p$ if $D_M\cong(\IZ/p\IZ)^{\oplus a}$ for some positive integer~$a$.

In the sequel, we will use the lattices $E_8(-1)$ and $A_4(-1)$, which are the opposites of the usual positive definite root lattices, the hyperbolic plane $U$, the  rank two lattice $H_5$ with
Gram matrix $\begin{pmatrix} 2 & 1\\1 & -2\end{pmatrix}$, the lattice $A_4^\ast(-5)$ which is the dual lattice of $A_4$ with intersection form multiplied by~$-5$, and the rank one lattice~$\langle -2\rangle$  generated by an element of square~$-2$. Notation $\frac{\IZ}{p\IZ}(q)$, with $q\in \IQ/2\IZ$, means a finite quadratic form generated by an element of square~$q$, we refer to Nikulin~\cite{Nikulin} for deeper reading.

Let $\varphi\in\Orth(L)$ be an isometry of~$L$, of prime order $p$. The invariant subspace: 
$$
T_\varphi\coloneqq \ker(\varphi-\id_L),
$$ 
is a primitive sublattice of~$L$, since the restriction of the bilinear form to $T_\varphi$ is nondegenerate. Its orthogonal complement: 
$$
S_\varphi\coloneqq T^\perp,
$$ 
is also a primitive sublattice of~$L$. Denote by~$\xi_p$ a primitive $p$-th root of unity, by $K\coloneqq\IQ(\xi_p)$ the $p$-th cyclotomic field and by $\cO_K=\IZ[\xi_p]$ its ring of algebraic integers. 
It is easy to check that $S_\varphi=\ker\left(\Phi_p(\varphi)\right)$, where $\Phi_p$ is the $p$-th cyclotomic polynomial, so $S_\varphi\otimes_\IZ\IQ$ is endowed with the structure of a finite-dimensional $K$-vector space. As a consequence, there exists a nonnegative integer $m_\varphi$ such that:
$$
\rank_\IZ S_\varphi=\dim_\IQ S_\varphi\otimes_\IZ\IQ=(p-1)m_\varphi.
$$

\begin{proposition}  \label{prop:rel_a_m}
Let $L$ be an integral lattice and $\varphi\in\Orth(L)$ an isometry of
prime order~$p$. There exists a nonnegative integer~$a_\varphi$ such that $\frac{L}{T_\varphi\oplus S_\varphi}\cong \left(\frac{\IZ}{p\IZ}\right)^{\oplus a_\varphi}$ and we have $a_\varphi\leq m_\varphi$.
\end{proposition}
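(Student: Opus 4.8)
The plan is to handle the two assertions separately. For the $p$-elementarity of $G\coloneqq L/(T_\varphi\oplus S_\varphi)$ I would argue in $\IZ[x]$ using the factorization $x^p-1=(x-1)\Phi_p(x)$. Since $\Phi_p(1)=p$, the polynomial $\Phi_p(x)-p$ is divisible by $x-1$, which produces an identity $p=\Phi_p(x)+a(x)(x-1)$ with $a(x)\in\IZ[x]$. Evaluating at $\varphi$ and applying to an arbitrary $v\in L$ gives $pv=\Phi_p(\varphi)(v)+a(\varphi)(\varphi-\id)(v)$, and the two summands lie in $T_\varphi$ and $S_\varphi$ respectively. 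Indeed, from $(\varphi-\id)\Phi_p(\varphi)=\varphi^p-\id=0$ one gets $\Phi_p(\varphi)(v)\in\ker(\varphi-\id)=T_\varphi$, while the same relation gives $(\varphi-\id)(v)\in\ker\Phi_p(\varphi)=S_\varphi$ and $S_\varphi$ is $\varphi$-stable. Hence $pv\in T_\varphi\oplus S_\varphi$ for every $v$ (the sum being direct because $T_\varphi\cap T_\varphi^\perp=0$), so the finitely generated abelian group $G$ is annihilated by $p$ and is therefore isomorphic to $(\IZ/p\IZ)^{\oplus a_\varphi}$ for some $a_\varphi\geq0$.

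The computation above records the crucial containment $(\varphi-\id)L\subseteq S_\varphi$, which I would now feed into an $\cO_K$-module argument. Set $M\coloneqq L/T_\varphi$; it is torsion-free over $\IZ$ because $T_\varphi$ is primitive, and since $\Phi_p(\varphi)(v)\in T_\varphi$ for every $v$, the operator $\Phi_p(\varphi)$ vanishes on $M$. Thus $\varphi$ makes $M$ into a module over $\IZ[x]/(\Phi_p(x))\cong\cO_K$, with $\varphi$ acting as multiplication by $\xi_p$. As $\cO_K$ is a Dedekind domain, the finitely generated torsion-free module $M$ is projective, and comparing ranks, $\rank_\IZ M=\rank_\IZ L-\rank_\IZ T_\varphi=\rank_\IZ S_\varphi=(p-1)m_\varphi$, so that $\rank_{\cO_K}M=m_\varphi$. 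The composite $S_\varphi\hookrightarrow L\twoheadrightarrow M$ is injective because $S_\varphi\cap T_\varphi=0$, and its cokernel is exactly $L/(T_\varphi\oplus S_\varphi)=G$, so I obtain a short exact sequence of $\cO_K$-modules $0\to S_\varphi\to M\to G\to0$.

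It remains to bound $a_\varphi=\dim_{\IZ/p\IZ}G$. The key is that $\varphi$ acts trivially on $G$, which is immediate from $(\varphi-\id)L\subseteq S_\varphi\subseteq T_\varphi\oplus S_\varphi$. In terms of the $\cO_K$-structure this says that the ramified prime $\mathfrak{p}\coloneqq(1-\xi_p)$, the unique prime of $\cO_K$ above $p$, annihilates $G$; equivalently $\mathfrak{p}M\subseteq S_\varphi$ inside $M$. Therefore $G=M/S_\varphi$ is a quotient of $M/\mathfrak{p}M$, and since $M$ is projective of rank $m_\varphi$ with residue field $\cO_K/\mathfrak{p}\cong\IZ/p\IZ$, one has $M/\mathfrak{p}M\cong(\IZ/p\IZ)^{\oplus m_\varphi}$. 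This yields $a_\varphi=\dim_{\IZ/p\IZ}(M/S_\varphi)\leq\dim_{\IZ/p\IZ}(M/\mathfrak{p}M)=m_\varphi$. I expect the real obstacle to be precisely this last step: the $p$-elementarity is a routine B\'ezout manipulation, whereas the inequality only surfaces once one recognizes $L/T_\varphi$ as a projective $\cO_K$-module and reinterprets the triviality of the $\varphi$-action on the glue group as the statement that $\mathfrak{p}$ kills the cokernel, after which reduction modulo $\mathfrak{p}$ delivers the bound.
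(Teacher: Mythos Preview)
Your proof is correct. The paper does not give a self-contained argument here: for the $p$-elementarity of $L/(T_\varphi\oplus S_\varphi)$ it cites \cite[Lemma~3.1]{BCMS} and \cite[Lemme~2.9]{Tari}, and for the inequality $a_\varphi\leq m_\varphi$ it defers to \cite[Th\'eor\`eme~2.18]{Tari} and \cite[Lemma~1.8]{MTW_Aut}. Your B\'ezout-type identity $p=\Phi_p(\varphi)+a(\varphi)(\varphi-\id)$ handles the first claim in the standard way, and your $\cO_K$-module argument---viewing $L/T_\varphi$ as a projective $\cO_K$-module of rank~$m_\varphi$ and using that the totally ramified prime $\mathfrak{p}=(1-\xi_p)$ annihilates the glue group, so that $G$ is a quotient of $(L/T_\varphi)/\mathfrak{p}(L/T_\varphi)\cong(\IZ/p\IZ)^{m_\varphi}$---is along the same lines as those references. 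In effect you have supplied a complete proof where the paper only records the history and points to the literature.
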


It is not difficult to see that $\frac{L}{T_\varphi\oplus S_\varphi}$ is a $p$-torsion module (see~\cite[Lemma~3.1]{BCMS}, \cite[Lemme~2.9]{Tari}). The property $a_\varphi\leq m_\varphi$ was first proved in~\cite[Theorem~2.1(c)]{AST} in the context of isometries of K3 lattices and then in~\cite[Corollary~3.7]{BCS_class} in the context of isometries of hyper-K\"ahler manifolds of $\text{K3}^{[2]}$-type, but only for {${p\notin\{5,23\}}$}. The above statement is a very useful generalization of these properties, whose proof first appeared in the third author's PhD thesis~\cite[Th\'eor\`eme~2.18]{Tari}, see also~\cite[Lemma~1.8]{MTW_Aut}.

\begin{theorem}\label{th:square} Let $L$ be an integral lattice and $\varphi\in\Orth(L)$ an isometry of prime order~$p\neq 2$. The integer $p^{m_\varphi}\discr(S_\varphi)$ is a square. 
\end{theorem}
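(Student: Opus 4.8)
The plan is to analyze the discriminant form on $S_\varphi$ coming from the $\cO_K$-module structure, where $K = \IQ(\xi_p)$. The key observation is that $S_\varphi \otimes_\IZ \IQ$ carries a $K$-vector space structure, so $S_\varphi$ itself is an $\cO_K$-lattice of rank $m_\varphi$ over $\cO_K$. The integer $\discr(S_\varphi)$ should be expressible in terms of the discriminant of the field $K$ and the $\cO_K$-module discriminant of $S_\varphi$. Since $p$ is odd, the discriminant of the cyclotomic field $\IQ(\xi_p)$ is $(-1)^{(p-1)/2} p^{p-2}$, and raising this to a power linked to $m_\varphi$ and combining with the relative discriminant should leave a perfect square up to the factor $p^{m_\varphi}$ that appears in the statement.

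Concretely, I would set up the transfer (corestriction) of the bilinear form: the $\IZ$-valued symmetric form $\langle\cdot,\cdot\rangle$ on $S_\varphi$ can be recovered from a Hermitian or $K/\IQ$-bilinear form $h$ on the $\cO_K$-module via the trace map $\trace_{K/\IQ}$. First I would establish that $\langle x, y\rangle = \trace_{K/\IQ}\!\big(\delta^{-1} h(x,y)\big)$ for a suitable different-related element $\delta$, so that the determinant of the integral Gram matrix factors as a norm of the $\cO_K$-form's determinant times a power of the field discriminant. Taking determinants and using that $\rank_\IZ S_\varphi = (p-1)m_\varphi$, the $\IZ$-discriminant becomes $\discr(S_\varphi) = \pm N_{K/\IQ}(\det h) \cdot \discr(K)^{m_\varphi}$ up to signs I would track carefully. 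Since $N_{K/\IQ}$ of any element is, up to sign, determined modulo squares by its norm, and $\discr(K) = (-1)^{(p-1)/2} p^{p-2}$ contributes $p^{(p-2)m_\varphi}$, multiplying by $p^{m_\varphi}$ yields the exponent $(p-1)m_\varphi$ on $p$, which is even because $p-1$ is even. The remaining factors should organize into a genuine square.

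The main obstacle will be controlling the \emph{sign} and the non-$p$ part of the norm $N_{K/\IQ}(\det h)$, ensuring the product is a square as an integer rather than merely a square up to sign or up to a unit. I would handle this by arguing that the Hermitian form structure forces $\det h$ to lie in a subfield (the maximal real subfield $K^+$) or that its norm from $K$ down to $\IQ$ automatically produces squares because each rational prime other than $p$ either splits or has even local contribution in the $\cO_K$-module. An alternative and perhaps cleaner route, avoiding delicate sign bookkeeping, is to work $p$-adically and prime-by-prime: one shows $p^{m_\varphi}\discr(S_\varphi)$ is a square at every prime $\ell$ by comparing the local invariants of the lattice $S_\varphi$ with those dictated by the $\cO_K \otimes \IZ_\ell$-structure, using that $\ell \neq p$ is unramified in $K$ (so its contribution comes in pairs from the $(p-1)$ conjugate embeddings) while the prime $p$ is totally ramified and its local discriminant contribution is exactly compensated by the factor $p^{m_\varphi}$.

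Between these, I expect the local/adelic approach to be the most robust, since it sidesteps the global sign issue entirely: an integer is a perfect square if and only if it is positive and a square in $\IZ_\ell$ for all $\ell$ (together with positivity, which follows since $\discr$ of a nondegenerate form times a power of $p$ has sign governed by the signature, and squaring eliminates it). Therefore my concrete plan is first to reduce to the claim $v_\ell\big(p^{m_\varphi}\discr(S_\varphi)\big) \equiv 0 \pmod 2$ for every prime $\ell$, then to verify this at $\ell \neq p$ using the even multiplicity forced by the $(p-1)$-fold conjugate structure, and finally to verify it at $\ell = p$ where the total ramification makes $v_p(\discr(S_\varphi)) \equiv m_\varphi \pmod 2$, so that adding $m_\varphi$ restores evenness.
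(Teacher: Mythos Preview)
The paper does not actually give a self-contained proof of this theorem: it simply observes that the characteristic polynomial of $\varphi$ on $S_\varphi\otimes_\IZ\IQ$ is $\Phi_p^{m_\varphi}$ and invokes Bayer--Fl\"uckiger's general result \cite[Proposition~5.1]{Bayer2} on determinants of lattices equipped with an isometry of given characteristic polynomial (the thesis~\cite{Tari} contains an induction argument). Your proposal, by contrast, is an outline of how one might \emph{prove} such a Bayer--Fl\"uckiger-type statement from scratch via the $\cO_K$-Hermitian structure and trace transfer. In that sense you are not reproducing the paper's argument but rather sketching the content of the reference it cites; the underlying idea (Hermitian lattice over the cyclotomic ring, discriminant as a norm times a power of $\discr(K)$) is indeed the one behind those references.

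That said, what you have written is a plan, not a proof, and two points would need real work before it becomes one. First, the sign worry is a red herring in this paper's conventions: here $\discr(L)$ is \emph{defined} as the order of the finite group $L^\vee/L$, hence automatically a positive integer, so you only need to control valuations. Second, and more seriously, your local argument at primes $\ell\neq p$ is too vague. Saying ``the $(p-1)$-fold conjugate structure forces even multiplicity'' is not an argument: a rank-$(p-1)$ $\IZ$-lattice can certainly have odd $\ell$-valuation in its Gram determinant in general, so you must actually use that the form is the \emph{transfer} of a Hermitian form over $K/K^+$ (with $K^+$ the maximal real subfield). The precise mechanism is that at $\ell\neq p$ the local algebra $\cO_K\otimes\IZ_\ell$ is \'etale over $\IZ_\ell$, and the determinant of a transferred Hermitian form is a norm from $(K^+)_\ell$ times a unit; this is what forces the square, but it requires the pairing of eigenvalues $\xi_p^i\leftrightarrow\xi_p^{-i}$ coming from the isometry condition $\langle\varphi x,y\rangle=\langle x,\varphi^{-1}y\rangle$, which you never explicitly invoke. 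Likewise, at $\ell=p$ your assertion $v_p(\discr S_\varphi)\equiv m_\varphi\pmod 2$ is exactly the statement to be proved and needs the computation with the different of $K/\IQ$ that you alluded to but did not carry out.
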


This result first appeared in this context in~\cite[Equation~(2)]{BCMS}, in the special case $m_\varphi=1$, based on results of Bayer--Fl\"uckiger~\cite{Bayer}. It was generalized in the third author's PhD thesis~\cite[Th\'eor\`eme~2.23]{Tari} by an induction argument. A more general statement, with a shorter proof, is given by
Bayer--Fl\"uckiger~\cite[Proposition~5.1]{Bayer2}: to apply it to our situation, simply observe that the characteristic polynomial of the restriction of $\varphi$ to $S_\varphi\otimes_\IZ\IQ$ is $\Phi_p^{m_\varphi}$.

\begin{corollary} Under the assumptions of Theorem~\ref{th:square}, if furthermore the lattice~$L$ is unimodular, then $\discr(S_\varphi)=p^{a_\varphi}$ and $a_\varphi\equiv m_\varphi\mod(2)$.
\end{corollary}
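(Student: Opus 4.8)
The plan is to combine the structural result of Proposition~\ref{prop:rel_a_m} with the unimodularity of~$L$ to pin down $\discr(S_\varphi)$ exactly, and then to read off the parity statement directly from Theorem~\ref{th:square}. Write $M\coloneqq T_\varphi$ and $N\coloneqq S_\varphi=M^\perp$, and set $H\coloneqq L/(M\oplus N)$, which by Proposition~\ref{prop:rel_a_m} is isomorphic to $\left(\IZ/p\IZ\right)^{\oplus a_\varphi}$, so that $\abs{H}=p^{a_\varphi}$. The heart of the argument is to show that under the unimodularity hypothesis one has $\discr(N)=\abs{H}$, after which everything else is formal.

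To establish this I would use two standard facts about the finite-index inclusion $M\oplus N\subseteq L$. First, $\discr(M\oplus N)=[L:M\oplus N]^2\discr(L)$; since $\discr(M\oplus N)=\discr(M)\discr(N)$ and $L$ is unimodular, this reads $\discr(M)\discr(N)=\abs{H}^2$. Second, the two projections $H\to D_M$ and $H\to D_N$ induced by $L\subseteq M^\vee\oplus N^\vee$ are injective: if $x\in L$ has $\pi_M(x)\in M$, then $x-\pi_M(x)\in L\cap(N\otimes_\IZ\IQ)=N$ by primitivity of~$N$, whence $x\in M\oplus N$; the other case is symmetric. Consequently $\abs{H}\leq\discr(M)$ and $\abs{H}\leq\discr(N)$, and multiplying these two inequalities against the identity $\discr(M)\discr(N)=\abs{H}^2$ forces equality throughout. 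This yields $\discr(S_\varphi)=\discr(N)=\abs{H}=p^{a_\varphi}$, the first assertion.

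For the parity statement, Theorem~\ref{th:square} guarantees that $p^{m_\varphi}\discr(S_\varphi)$ is a perfect square. Substituting $\discr(S_\varphi)=p^{a_\varphi}$ turns this into the condition that $p^{m_\varphi+a_\varphi}$ is a square, which for the prime~$p$ holds precisely when $m_\varphi+a_\varphi$ is even, that is $a_\varphi\equiv m_\varphi\mod(2)$. I expect the only genuinely delicate point to be the injectivity-plus-index argument of the middle paragraph, where unimodularity is indispensable; once $\discr(S_\varphi)=p^{a_\varphi}$ is secured, the second conclusion is an immediate consequence of Theorem~\ref{th:square}.
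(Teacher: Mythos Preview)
Your proof is correct and follows essentially the same approach as the paper: the paper simply cites the standard fact (from \cite[\S I.2]{BHPV}) that for a primitive sublattice of a unimodular lattice one has $D_{S_\varphi}\cong D_{T_\varphi}\cong L/(T_\varphi\oplus S_\varphi)$, while you unpack precisely the proof of (the cardinality part of) that fact via the index formula and the injectivity of the two projections. The deduction of the parity from Theorem~\ref{th:square} is identical.
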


\begin{proof} If $L$ is unimodular, then $D_{S_\varphi}\cong D_{T_\varphi}\cong \left(\frac{\IZ}{p\IZ}\right)^{\oplus a_\varphi}$ so $\discr(S_\varphi)=p^{a_\varphi}$ 
(see~\cite[\S I.2]{BHPV}), hence by 
Theorem~\ref{th:square}, the number $a_\varphi+m_\varphi$ is even.
\end{proof}

\section{Order five symmetries of the Hilbert square of a K3 surface}

Let $X$ be a projective irreducible holomorphic symplectic manifold, deformation equivalent to the Hilbert square of a K3 surface. We recall that the second cohomology group with integer coefficients $\HH^2(X,\IZ)$ is a rank $23$ lattice for the Beauville--Bogomolov--Fujiki quadratic form, which is isometric to the lattice:
\[
L\coloneqq U^{\oplus 3}\oplus E_8(-1)^{\oplus 2}\oplus \langle -2\rangle.
\]

Let $f\in\Aut(X)$ be a nonsymplectic biholomorphic automorphism of prime order $p$. It is easy to see that $2\leq p\leq 23$ (see~\cite[\S5.4]{BNWS_Smith}). We consider the isometry $\varphi\coloneqq f^\ast$ induced by $f$ on the lattice $\HH^2(X,\IZ)$. In the sequel we write $m,a,S,T$ instead of $m_\varphi,a_\varphi,S_\varphi,T_\varphi$.
Since the representation:
\[
\Aut(X)\to\Orth(\HH^2(X,\IZ)),\quad f\mapsto (f^\ast)^{-1}
\]
is faithful, nonsymplectic prime order automorphisms are classified by the data of the invariant lattice $T$ of $\varphi$ and its orthogonal complement $S$. For $2\leq p\leq 19$ and $p\neq 5$ the classification is given in~\cite{BCS_class}, for $p=23$ it is given in~\cite{BCMS}. The study of the case $p=5$ was not finished in~\cite[Table~2]{BCS_class} since it was not proven that the list was complete, see~\cite[Remark~6.1]{BCS_class}. The goal of our first main result is to fill this gap: it requires the tools developed in the previous section. 

\begin{theorem}\label{th:class} Let $f$ be an order five nonsymplectic automorphism acting on an irreducible holomorphic symplectic manifold, deformation equivalent to the Hilbert square of a K3 surface. Then its invariant lattice~$T$ and its orthogonal complement~$S$ are one of the following:
$$
\begin{array}{|c|c|c|c|}
\hline
m & a & S & T\\\hline
1 & 1 & U\oplus H_5 & E_8(-1)^{\oplus 2}\oplus H_5\oplus \langle -2\rangle \\\hline
2 & 2 & U\oplus H_5\oplus A_4(-1) & E_8(-1)\oplus H_5\oplus A_4(-1)\oplus\langle -2\rangle\\\hline
3 & 1 & U\oplus E_8(-1)\oplus H_5 & E_8(-1)\oplus H_5\oplus \langle -2\rangle\\\hline
3 & 3 & U\oplus H_5\oplus A_4(-1)^{\oplus 2} & H_5\oplus A_4(-1)^{\oplus 2}\oplus \langle -2\rangle\\\hline
4 & 2 & U\oplus E_8(-1)\oplus H_5\oplus A_4(-1) & H_5\oplus A_4(-1)\oplus\langle -2\rangle\\\hline
4 & 4 & U(5)\oplus E_8(-1)\oplus H_5\oplus A_4(-1) & H_5\oplus A_4^\ast(-5)\oplus \langle -2\rangle\\\hline
5 & 1 & U\oplus E_8(-1)^{\oplus 2}\oplus H_5  & H_5\oplus \langle -2\rangle\\\hline
5 & 3 & U\oplus E_8(-1)\oplus H_5\oplus A_4(-1)^{\oplus 2} & U(5)\oplus\langle -10\rangle\\\hline
\end{array}
$$
\end{theorem}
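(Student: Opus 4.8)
The plan is to translate the data of $f$ into the lattice invariants $(m,a,S,T)$, to pin down the admissible numerical pairs $(m,a)$ using the tools of Section~\ref{s:prime}, and only then to identify the isometry classes of $S$ and $T$ for each surviving pair. Since $f$ is nonsymplectic, the real and imaginary parts of the holomorphic symplectic form span a positive-definite plane inside $S\otimes_\IZ\IR$, while the invariant lattice satisfies $T\otimes_\IZ\IR\subseteq\HH^{1,1}$ and carries a K\"ahler class; as $L$ has signature $(3,20)$ this forces $S$ to have signature $(2,4m-2)$ and $T$ signature $(1,22-4m)$, whence $1\le m\le 5$. Moreover $S=\ker\Phi_5(\varphi)$ is a free $\cO_K$-module of rank $m$ (the field $K=\IQ(\xi_5)$ has class number one), hence a Hermitian $\cO_K$-lattice. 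The standard relation $5\,S^\vee\subseteq S$ (from $\Phi_5(1)=5$) shows that $S$ is $5$-elementary, so $D_S\cong\left(\frac{\IZ}{5\IZ}\right)^{\oplus a_S}$ and the $5$-part of $D_T$ is elementary as well. Because $2$ is inert in $K$, the group $D_S$ carries no $2$-torsion, so the entire $2$-torsion of $D_L$, of order $\discr(L)=2$, must lie in $D_T$.

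Next I would identify $a$ with $a_S$. Writing $H\coloneqq\frac{L}{S\oplus T}\cong\left(\frac{\IZ}{5\IZ}\right)^{\oplus a}$ for the glue group and $H_S\subseteq D_S$ for its image, the triviality of the $5$-part of $D_L\cong H^\perp/H$ forces $H_S=D_S$: otherwise the nonzero orthogonal $H_S^\perp\subseteq D_S$ would survive in $H^\perp/H$ and contribute $5$-torsion to $D_L$. Hence $a=a_S$, so $\discr(S)=5^{a}$, and from $\discr(S)\discr(T)=5^{2a}\discr(L)$ we also get $\discr(T)=2\cdot 5^{a}$. The numerics then close up quickly. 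Proposition~\ref{prop:rel_a_m} gives $a\le m$; Theorem~\ref{th:square} applied to $5^{m}\discr(S)=5^{m+a}$ forces $m+a$ even, i.e.\ $a\equiv m\pmod 2$; the elementary $5$-part $\left(\frac{\IZ}{5\IZ}\right)^{\oplus a}$ of $D_T$ requires at least $a$ generators, whence $a\le\rank T=23-4m$; and $a\ge 1$, since for $m$ odd this is immediate from the parity, while for $m$ even the value $a=0$ is excluded because an even unimodular lattice of signature $(2,4m-2)$ exists only when $m$ is odd. Enumerating $1\le m\le 5$ with $1\le a\le\min(m,\,23-4m)$ and $a\equiv m\pmod 2$ yields exactly the eight pairs appearing in the table.

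Finally, for each admissible $(m,a)$ I would determine $S$ and $T$ up to isometry. Their ranks, signatures and discriminant groups are now fixed, and $S$ is an indefinite $5$-elementary lattice; by Nikulin's theory~\cite{Nikulin} its genus is determined by rank, signature and discriminant form, and contains a single class, so it suffices to exhibit one representative assembled from $U$, $U(5)$, $E_8(-1)$, $H_5$ and $A_4(-1)$ realising the prescribed invariants (the discriminant form being fixed by the signature modulo $8$ together with the square condition). The orthogonal complement $T=S^\perp$ in the fixed lattice $L$ then exists and is unique by Nikulin's criteria for primitive embeddings, its discriminant form being recovered from that of $S$ and $q_L$ through the gluing isomorphism; from it one recognises the listed representatives, including the twisted lattices $A_4^\ast(-5)$ and $U(5)\oplus\langle-10\rangle$ occurring in the rows $(4,4)$ and $(5,3)$.

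The conceptual crux, and precisely the ingredient that was missing in~\cite{BCS_class}, is the parity constraint $a\equiv m\pmod 2$ furnished by Theorem~\ref{th:square}: without it the pairs $(2,1),(3,2),(4,1),(4,3),(5,2)$ would survive as spurious candidates, and it is their elimination that establishes completeness of the list. I expect the main technical obstacle to lie not here but in the last step, namely verifying the hypotheses of Nikulin's existence and uniqueness theorems in each case and matching every surviving pair to a single concrete $(S,T)$, in particular distinguishing the two sub-cases that share the same value of $m$ and correctly identifying the exotic complements in the maximal rows.
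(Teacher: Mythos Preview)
Your strategy is the paper's: reduce to the numerical pair $(m,a)$ via the constraints $a\le m$, $a\equiv m\pmod 2$, $a\le 23-4m$, exclude $a=0$, and then pin down $S$ and $T$ by genus theory. Your identification of Theorem~\ref{th:square} as the key missing ingredient is exactly right. Two points deserve correction, however.

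First, the assertion ``$5\,S^\vee\subseteq S$ (from $\Phi_5(1)=5$)'' is not a valid general fact about $\cO_K$-lattices: take for instance the scaled root lattice $A_2(3)$, which carries a fixed-point-free isometry of order~$3$ but has $D_{A_2(3)}\cong\IZ/9\IZ\oplus\IZ/3\IZ$, hence is not $3$-elementary. The $5$-elementarity of $S$ here is a consequence of the embedding $S\hookrightarrow L$ and the fact that $D_L$ has trivial $5$-part, not of the cyclotomic identity alone; the paper imports this from~\cite[Lemma~5.5]{BNWS_Smith}. Your glue argument in the next paragraph is in fact close to what is needed, but you should run it \emph{before} asserting $5$-elementarity, not after.

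Second, your blanket appeal to ``Nikulin's theory'' for uniqueness of $S$ and of the primitive embedding does not cover the row $(m,a)=(5,3)$: there $\rank T=3$ while $D_T\cong\IZ/2\IZ\oplus(\IZ/5\IZ)^{\oplus 3}$ has length~$3$, so the usual sufficient condition $\rank\ge\ell(D)+2$ fails. The paper handles the other rows by splitting $S=U\oplus S'$ and invoking the Rudakov--Shafarevich classification of $5$-elementary lattices together with~\cite[Corollary~22, p.~395]{CS}, and then treats $(5,3)$ separately by an explicit discriminant-form computation exhibiting $T\cong U(5)\oplus\langle-10\rangle$. You correctly anticipate that the last step is the delicate one, but this particular case is precisely where a concrete verification is unavoidable.
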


This result first appeared in the third author's PhD thesis~\cite[Th\'eor\`eme~3.24]{Tari} (note that there is a typo there on case $(4,2)$, that case $(5,3)$ was missing and that in case $(4,4)$ we have  $U(5)\oplus A_4(-1)\cong U\oplus A_4(-5)^\ast$, showing that our result matches what is written there; the lattices reproduced here are those of~\cite[Table~2]{BCS_class}).

\begin{proof}
We start by dressing a list of all possible values of the pair of invariants~$(m,a)$. For this, we use several results from various other sources to narrow down the list of possibilities.
By~\cite[Lemma~5.5]{BNWS_Smith}, the discriminant group of the lattice $S$ is necessarily $D_{S}\cong \left(\IZ/5\IZ\right)^{\oplus a}$, so Theorem~\ref{th:square} gives the relation:
\[
a\equiv m\mod(2).
\] 
By Proposition~\ref{prop:rel_a_m}, we have $a\leq m$ and $4m=\rank_\IZ S\leq 23$, so $1\leq m\leq 5$ and moreover, by~\cite[Proposition~4.13]{BNWS_Smith} we have also $a\leq 23-4m$. This gives a first list of possible pairs $(m,a)$:
$$
(1,1),(2,0),(2,2),(3,1),(3,3),(4,0),(4,2),(4,4),(5,1),(5,3).
$$
Again by~\cite[Lemma~5.5]{BNWS_Smith}, the lattice~$S$ has signature~$(2,4m-2)$, the lattice~$T$ is hyperbolic and $D_{T}\cong \IZ/2\IZ\oplus D_S$. Since: 
\[
\rank_\IZ S=4m\geq 3+m\geq 3+a,
\]
we can apply Nikulin's results on orthogonal decomposition of lattices~\cite[Corollary~1.13.5]{Nikulin}: the lattice $S$ decomposes as $S=U\oplus S'$ for some $5$-elementary lattice~$S'$ of signature~$(1,4m-3)$ and discriminant group~$\left(\IZ/5\IZ\right)^{\oplus a}$. Applying the classification theorem of Rudakov--Shafarevich~\cite[Section~1]{RS}
we get that such a lattice~$S'$ exists for each value of~$(m,a)$ listed aboved, except $(2,0)$ and~$(4,0)$, and that this lattice is unique with these invariants if~$m\geq 2$. For~$m=1$,  uniqueness comes from the classification of binary quadratic forms~\cite[Table 15.2a, p.~362]{CS}.
The genus of the lattice~$T$, which is the orthogonal complement of~$S$ for its unique embedding in~$\HH^2(X,\IZ)$, is characterized by~\cite[Proposition~1.15.1]{Nikulin}. In each case but one, the isometry class of~$T$ is determined 
by~\cite[Corollary~22, p.~395]{CS}.

The only missing case is $(m,a)=(5,3)$. The lattice $S$ is isometric to: 
$$
U\oplus E_8(-1)\oplus H_5\oplus A_4(-1)^{\oplus 2},
$$ 
so the discriminant
form of $S$ is isomorphic to: 
$$
\frac{\IZ}{5\IZ}\left(\frac{2}{5}\right)\oplus \frac{\IZ}{5\IZ}\left(\frac{-4}{5}\right) \oplus \frac{\IZ}{5\IZ}\left(\frac{-4}{5}\right).
$$ 
To prove that this lattice admits a primitive embedding in $L\cong\HH^2(X,\IZ)$, we apply Nikulin's results on primitive embeddings~\cite[Proposition~1.15.1]{Nikulin}, here in a quite easy situation. By~\cite[Proposition~2.7 and its proof]{BCS_class}, the existence of a primitive embedding of~$S$ in~$L$ is equivalent to the existence of a lattice whose signature and discriminant form are those of an orthogonal complement of~$S$ in~$L$, here a lattice of signature $(1,2)$ and discriminant form 
isomorphic to:
$$
\frac{\IZ}{5\IZ}\left(\frac{-2}{5}\right)\oplus \frac{\IZ}{5\IZ}\left(\frac{4}{5}\right) \oplus \frac{\IZ}{5\IZ}\left(\frac{4}{5}\right)\oplus \frac{\IZ}{2\IZ}\left(\frac{-1}{2}\right).
$$
An easy computation shows that the lattice $U(5)\oplus\langle -10\rangle$ satisfies these requirements, so $S$ admits a primitive embedding in $L$ and, again by~\cite[Proposition~2.7]{BCS_class}, the embedding is unique up to an isometry of~$L$.
\end{proof}

Using~\cite[Theorem~5.3]{AST} we see that each case of the classification but the case $(m,a)=(5,3)$ is geometrically realized by an automorphism of the Hilbert square of a K3 surface,
induced by an order five nonsymplectic automorphism of the underlying surface. We refer to~\cite[\S 6.1]{BCS_class} for a geometric description of these automorphisms.
Following the terminology of~\cite[Definition~4.1]{BCS_class}, for $X$~deformation equivalent to the Hilbert square~$\Sigma^{[2]}$ of a K3 surface~$\Sigma$, an automorphism~$f$ of~$X$ is called \emph{natural} if the pair $(X,f)$ deforms to a pair
$(\Sigma^{[2]},\sigma^{[2]})$, where $\sigma$~is an automorphism of~$\Sigma$.
One of the main questions in the study of automorphisms of this kind of varieties is to find out whether they admit nonnatural automorphisms, meaning that they have nonexpected symmetries beyond those of the geometric objects they are built on. Natural automorphisms are easy to detect from the isometry class of their invariant lattice, whose orthogonal decomposition has a $\langle -2 \rangle$ factor. In a similar spirit, following 
the terminology of~\cite{CKKM,MW}, an automorphism is called \emph{(twisted) induced} if it
deforms to an automorphism of a (twisted) moduli space of stable sheaves over a K3 surface, with fixed Mukai vector, where the automorphism comes from the underlying K3 surface. This construction permits to realize geometrically more examples whose existence is proven using lattice theory.

\begin{example}  The following geometric realization of the case $(m,a)=(5,3)$ has been communicated to us by Chiara Camere. We refer to~\cite[\S 2.3]{CKKM} for an overview on moduli spaces of twisted sheaves; we follow the presentation given there. Similar examples are given in~\cite{CC}.
 By~\cite[\S 5]{AST}, there exists a projective K3 surface~$S$ with an order five nonsymplectic automorphism~$\sigma$ with invariant lattice isometric to the lattice~$H_5$. Furthermore, denoting by~$H$ the generator with square $H^2=2$ of this lattice, we may assume that $H$~is ample. Generically in the moduli space of such pairs~$(S,\sigma)$, the Picard group of~$S$ is equal to the invariant lattice of~$\sigma$, hence isometric to the lattice~$H_5$, and its transcendental lattice~$\Trans(S)$ is thus isometric to:
\[
H_5\oplus U\oplus E_8(-1)^{\oplus 2}.
\] 
Denote by $e$~the first generator of the summand~$U$ in this decomposition. The surjective morphism $\alpha\colon \Trans(S)\to \IZ/5\IZ$ given by intersecting with~$e$ defines an order~$5$ element in the Brauer group $\HH^2(\cO_S^\ast)$ of~$S$, that we still denote~$\alpha$. Consider the Mukai lattice:
\[
\HH^\ast(S,\IZ)\coloneqq\HH^0(S,\IZ)\oplus \HH^2(S,\IZ)\oplus \HH^4(S,\IZ),
\]
equipped with the pairing $(r,H,s)\cdot (r',H',s')=H\cdot H'-rs'-r's$. We take the $B$-field $B\coloneqq \frac{e}{5}$ and the twisted Mukai vector $v_B\coloneqq (0,H,0)\in \HH^\ast(S,\IZ)$. The moduli space $M_{v_B}(S,\alpha)$ of stable $\alpha$-twisted coherent sheaves on~$S$, with twisted Mukai vector~$v_B$ is an IHS manifold deformation equivalent to the Hilbert square of a K3 surface, and $\sigma$~induces an order five nonsymplectic automorphism~$f$ on it. We want to check that this automorphism corresponds to the case $(m,a)=(5,3)$. For this we compute its invariant lattice, which is generically the intersection of~$v_B^\perp$ with the sublattice of the Mukai lattice generated by $\Pic(S)$ and the classes~$(0,0,1)$ and~$(5,e,0)$. An easy computation shows that these last two classes are orthogonal to $\Pic(S)$ and generate a sublattice isometric to~$U(5)$ and contained in~$v_B^\perp$, so the computation reduces to computing the orthogonal complement of the class~$H$  in~$\Pic(S)$, which is a lattice isometric to~$\langle -10\rangle$. As a consequence, the invariant lattice of the action of~$\sigma$ on~$M_{v_B}(S,\alpha)$  is isometric to $U(5)\oplus\langle -10\rangle$ and thus corresponds to the case $(m,a)=(5,3)$.
\end{example}

As a conclusion, the following result shows that not only each case of the classification can be geometrically realize as above, but any variety with such an automorphism deforms equivariantly to those. This answers the open question~\cite[Remark~6.1]{BCS_class}:

\begin{corollary} 
Every order five nonsymplectic automorphism, acting on an irreducible holomorphic symplectic manifold deformation equivalent to the Hilbert square of a K3 surface is natural if $(m,a)\neq (5,3)$ and is twisted induced otherwise.
\end{corollary}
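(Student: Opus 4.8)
The plan is to upgrade the lattice-theoretic classification of Theorem~\ref{th:class} to a geometric one by means of a single deformation principle: for a nonsymplectic automorphism $f$ of prime order on an IHS manifold of $\text{K3}^{[2]}$-type, the equivariant deformation class of the pair $(X,f)$ is determined by the conjugacy class of the isometry $\varphi=f^\ast$ in $\Orth(L)$, equivalently by the pair $(S,T)$ together with its primitive embedding in $L$. This is the content of the global Torelli theorem for such manifolds combined with the connectedness of the period domain attached to $\varphi$, and it is exactly the framework in which the notion of \emph{natural} automorphism is defined (see~\cite[\S4]{BCS_class}). The proof of Theorem~\ref{th:class} establishes that in each of the eight cases the primitive embedding $S\hookrightarrow L$, hence the conjugacy class of~$\varphi$, is unique; consequently every line of the table corresponds to a \emph{single} equivariant deformation class of pairs $(X,f)$.

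Granting this, the statement reduces to exhibiting one geometric representative per deformation class and reading off its type. For each case with $(m,a)\neq(5,3)$, the discussion following Theorem~\ref{th:class} produces, via~\cite[Theorem~5.3]{AST}, a representative of the form $(\Sigma^{[2]},\sigma^{[2]})$ arising from an order five nonsymplectic automorphism~$\sigma$ of a K3 surface~$\Sigma$. Such a pair is natural by definition, so by the deformation principle every automorphism falling in one of these seven cases is natural. For the remaining case $(m,a)=(5,3)$, the Example above constructs a representative on a moduli space $M_{v_B}(S,\alpha)$ of twisted sheaves, on which $f$ is induced from the underlying surface; hence, again by the deformation principle, every automorphism in this case is twisted induced.

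Finally I would check that the case $(5,3)$ is genuinely not natural, so that the dichotomy is sharp. A natural automorphism $\sigma^{[2]}$ fixes the class~$\delta$, with $\delta^2=-2$, cutting out the nonreduced length-two subschemes, so its invariant lattice contains $\langle-2\rangle$ as an orthogonal direct summand, a property preserved under equivariant deformation~\cite[\S4]{BCS_class}. However, in the case $(5,3)$ one has $T\cong U(5)\oplus\langle-10\rangle$, a lattice in which every vector has square divisible by~$10$; in particular $T$ represents no vector of square $-2$ and admits no $\langle-2\rangle$ summand. Therefore this automorphism cannot be natural, and the stated dichotomy holds.

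The main obstacle is the deformation principle invoked at the outset. Turning the uniqueness of the primitive embedding $S\hookrightarrow L$ into an honest equivariant deformation equivalence of the geometric pairs requires the global Torelli theorem for IHS manifolds together with a connectedness statement for the moduli of $\varphi$-marked pairs and careful bookkeeping of the monodromy group preserving the prescribed action on cohomology. Once this input is in place, the corollary is a direct reading of the table of Theorem~\ref{th:class} against the two geometric constructions already at hand.
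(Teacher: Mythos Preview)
Your overall strategy---reduce to the $\Orth(L)$-conjugacy class of $\varphi$, then exhibit a representative in each class---matches the paper's. But there is a genuine gap in the step you flag with ``hence'': uniqueness of the primitive embedding $S\hookrightarrow L$ does \emph{not} by itself determine the conjugacy class of $\varphi$. Fixing the embedding pins down the pair $(T,S)$ inside $L$, but $\varphi$ acts as the identity on $T$ and as some fixed-point-free order-five isometry on $S$; two such isometries of $S$ that are not conjugate in $\Orth(S)$ would yield inequivalent $\varphi$'s even though the embedding is the same. So one must also prove that a fixed-point-free order-$5$ isometry of $S$ is unique up to conjugation in $\Orth(S)$, and the proof of Theorem~\ref{th:class} says nothing about this.

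The paper treats this issue separately in the two regimes. For $(m,a)\neq(5,3)$ it invokes \cite[Corollary~5.7]{BCS_class}, which already packages the required uniqueness of $\varphi$ from $S$ and $T$. For $(m,a)=(5,3)$, which was not covered there, the paper singles this out as the ``tricky point'' and appeals to Brandhorst--Cattaneo \cite{BC}: one analyses the eigenspace decomposition of $\varphi+\varphi^{-1}$ on $S_\IR$, reads off the signatures $(8,10)$ of the associated $\IZ[\xi_5]$-lattice from the signature $(2,18)$ of $S$, observes that the Steinitz class is trivial, and applies \cite[Proposition~2.10]{BC} to conclude there is a unique conjugacy class of subgroups generated by a fixed-point-free order-five isometry of $S$. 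Your argument omits this ingredient entirely; without it the case $(5,3)$ is not settled. The deformation input you describe is indeed what the paper uses (citing \cite{BCS_ball} for connectedness of the relevant moduli spaces), and your remark that $T\cong U(5)\oplus\langle-10\rangle$ admits no $\langle-2\rangle$ summand is correct and worth noting, though the paper does not include it.
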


\begin{proof}
 Assume first that $(m,a)\neq (5,3)$. By~\cite[Corollary~5.7]{BCS_class}, the action $\varphi\coloneqq f^\ast$ of the automorphism on $\HH^2(X,\IZ)$, 
hence $f$~itself, is uniquely determined by the lattices~$S$ and~$T$.
By~\cite[Theorem~4.5, Section~5.3, Theorem~5.6, Section~7.1]{BCS_ball} the moduli spaces of lattice polarized nonsymplectic automorphisms considered here are connected
(in the case $(m,a)=(1,1)$ it is a point), and since each case is already realized by an automorphism coming from a K3 surface, this proves the statement. 

A similar argument  shows that, in the case $(m,a)=(5,3)$, every automorphism deforms to a twisted induced one. The tricky point consists in proving that the lattice~$S$ given in the table determines the action of~$\varphi$, up to conjugation by an isometry of~$S$. For this we use a recent work of Brandhorst--Cattaneo~\cite{BC}. The order five isometry $\varphi=f^\ast$ acts fixed point free on the lattice~$S$, so its minimal polynomial is~$\Phi_5$. The minimal polynomial of $\varphi+\varphi^{-1}$ is thus $X^2+X-1$, so its action on $S_\IR\coloneqq S\otimes_\IZ\IR$ admits an orthogonal decomposition:
\[
S_\IR=S_\IR(\alpha_1)\oplus S_\IR(\alpha_2),
\]
where $S_\IR(\alpha_i)$ is the eigenspace of $\varphi+\varphi^{-1}$ for the real eigenvalue $\xi_5^i+\xi_5^{-i}$. Both spaces have dimension~$10$. Since $S_\IR$ has signature $(2,18)$, the spaces $S_\IR(\alpha_i)$  have necessarily signatures $(2,8)$ and $(0,10)$. The negative parts $(8,10)$ define the \emph{signatures} of the $\IZ[\xi_5]$-lattice associated to the action of~$\varphi$ on~$S$. Taking~$\varphi^2$ instead of~$\varphi$ simply permutes the signatures. As explained in~\cite[Proposition~2.10]{BC} (here the Steinitz class is trivial), it follows that there exists a unique conjugacy class of subgroups generated by a fixed point free isometry of order five on the lattice~$S$.
\end{proof}

\section{Natural automorphisms of Kummer fourfolds}

\subsection{Generalized Kummer varieties}

Let $A$ be a two-dimensional complex torus with origin $0 \in A$. Let $\symm n A$ be the $n$\nobreakdash-th symmetric power of $A$,
$s\colon \symm n A \to A$ the summation morphism and $\pi\colon A^n \to \symm n A$ the quotient morphism.
Let $\hilb n A$ be the Hilbert scheme of $n$ points on $A$, $\rho\colon \hilb n A \to \symm n A$ the 
Hilbert--Chow morphism and put $\sigma \coloneqq s \circ \rho$.

The \emph{$n$\nobreakdash-th generalized Kummer variety of $A$} is the fibre $\genKm \coloneqq \sigma^{-1}(0)$.
It fits into the following cartesian diagram:
\begin{equation}\label{diagr:Kummer}
\xymatrix{A\times\genKm\ar[r]^-\nu\ar[d]_p & \hilbA\ar[d]^\sigma\\A\ar[r]_{\nn} & A}
\end{equation}
where $p$ is the projection onto the first factor, $\nn\colon A \to A$ is multiplication by~$n$ and $\nu\colon A \times \genKm \to \hilb n A,
(a, \xi) \mapsto a + \xi$ is a Galois covering with Galois group $G_n = \Tors_n(A)$, the finite abelian group of $n$-torsion points on
$A$. Here, $G_n$ acts  on $A \times \genKm$ by $g \cdot (a, \xi) = (a - g, g + \xi)$.
The variety $\genKm$ is a projective irreducible holomorphic symplectic manifold of dimension $2n-2$.

\subsection{Topological Lefschetz numbers}

Let $\psi\colon A \to A$ be an automorphism of the complex manifold $A$. It decomposes as
$\psi(x)= b + h(x)=(t_b\circ h)(x)$ for some $b \in A$, where $t_b$ denotes the translation by $b$ and $h\colon A \to A$
 is an isomorphism of complex Lie groups, both uniquely determined. It naturally induces
an isomorphism $\hilb n \psi\colon \hilb n A \to \hilb n A$. If we further assume that $b \in\Tors_n(A)$, the restriction of
$\hilb n \psi$ induces an automorphism $\ps n \psi \colon \genKm \to \genKm$.

We are interested in the topological Lefschetz number:
$$
	L(\ps n \psi) = \sum_{k\geq 0} (-1)^k \trace \left((\ps n \psi)^*|_{\HH^k(\genKm, \IC)}\right).
$$
The rough idea (see~\cite{NW2} for a similar idea) is to compute it by the relation: 
$$
L(\psi)L(\ps n \psi)=L(\psi \times \ps n \psi).
$$ 
Since $L(\psi)$
 may be zero, we replace the Lefschetz number
$L(f)$ of an automorphism $f\colon X \to X$ of a projective variety $X$ by the polynomial trace:
$$
	L(f, q) \coloneqq \sum_{k\geq 0}  (-1)^k \trace \left(f^*|_{\HH^k(X, \IC)}\right) q^k,
$$
which is always invertible in $\IC\llbracket q\rrbracket$. Evaluating at $q = 1$, we recover the usual topological Lefschetz number.

In our situation, since translations on $A$ are homotopic to the identity, they act trivially on the cohomology groups of $A$, hence $L(\psi,q) = L(h,q)$.
But for $n\geq 2$, in general $L(\psi^{\llbracket n\rrbracket})\neq L(h^{\llbracket n\rrbracket})$, since although translations by $n$-torsion points do act trivially on $\HH^2(\genKm,\IC)$ (see \cite[Corollary~5]{BNWS_Enriques})
they act nontrivially on the whole cohomology space of $\genKm$, as observed in~\cite[Theorem~1.3]{Oguiso}.

The third author noticed that there is a mistake in the proof of~\cite[Proposition~7]{BNWS_Enriques} giving a formula for the Lefschetz number of natural automorphisms of generalized Kummer varieties. Actually, the formula itself is not always true when the automorphism contains a nontrivial translation. In the sequel we state the correct formula and we give a complete proof.

\begin{proposition}\label{prop:formule}
Let $\psi\in\Aut(A)$, decomposed as $\psi=t_b\circ h$ with $b\in \Tors_n(A)$, and $\Psi\coloneqq \psi^*\colon \HH^1(A,\IC)\to \HH^1(A,\IC)$. Then:
$$
\sum_{n\geq 0} L(\ps n \psi,q)q^{-2n}t^n=\frac{1}{L(\psi,q)}\sum_{\chi \in (\dual G_n)^h} \chi(b)\prod_{v \geq 1}\prod_{i=0}^4\left(\det(1-(\wedge^i\Psi)q^{i-2}t^{v\abs\chi})\right)^{(-1)^{i+1}}.
$$
In particular,
$$
L(\ps n \psi,q)=\frac{q^{2n}}{L(\psi,q)}  \left.\frac{\diff^n}{n! \diff\! t^n}\right|_{t = 0}
		\sum_{\chi \in (\dual G_n)^h} \chi(b)
		\prod_{v \ge 1}
		\prod_{i=0}^4\left(\det(1-(\wedge^i\Psi)q^{i-2}t^{v\abs\chi})\right)^{(-1)^{i+1}}.
$$
\end{proposition}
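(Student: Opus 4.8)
The plan is to compute $L(\ps n \psi,q)$ indirectly, through the Galois cover $\nu$ and the multiplicativity of the polynomial trace. Set $\tilde\psi\coloneqq\psi\times\ps n\psi$ acting on $A\times\genKm$. By the Künneth formula the polynomial trace is multiplicative on products, so $L(\tilde\psi,q)=L(\psi,q)\,L(\ps n \psi,q)$, and it suffices to compute $L(\tilde\psi,q)$ and divide by $L(\psi,q)$; this already accounts for the prefactor $\frac1{L(\psi,q)}$. Writing $\psi=t_b\circ h$, a direct computation gives $\tilde\psi(a,\xi)=(b+h(a),\,b+h(\xi))$, so that $\tilde\psi$ normalizes the $G_n$-action and induces on $G_n=\Tors_n(A)$ the automorphism $g\mapsto h(g)$. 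Crucially, $\tilde\psi$ is \emph{not} the lift $\hat\psi$ of $\hilb n\psi$ along $\nu$: that lift is $\hat\psi(a,\xi)=(b+h(a),\,h(\xi))$, and the two differ by $\tilde\psi=(\id_A\times t_b)\circ\hat\psi$, a translation by $b\in G_n$ on the Kummer factor.

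I would then decompose $\HH^*(A\times\genKm,\IC)=\bigoplus_{\chi\in\dual{G_n}}\HH^*(A\times\genKm)_\chi$ into $G_n$-isotypic pieces. Since translations on $A$ are homotopic to the identity, the $G_n$-action is carried entirely by the $\genKm$-factor, and $\HH^*(A\times\genKm)_\chi=\HH^*(A,\IC)\otimes\HH^*(\genKm,\IC)_\chi$. As $\tilde\psi$ carries the $\chi$-component to the $(\chi\circ h)$-component, only $h$-invariant characters contribute to the trace, which is why the final sum ranges over $(\dual{G_n})^h$. On such a fixed component the discrepancy above gives $\tilde\psi^*=\chi(b)\,\hat\psi^*$, since $b\in G_n$ acts on the $\chi$-isotypic piece by the scalar $\chi(b)$. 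This is precisely the missing factor: the translation is invisible to the summation morphism $\sigma$ (because $nb=0$, whence $\sigma\circ\hilb n\psi=h\circ\sigma$), yet it acts nontrivially on $\genKm$ through the phase $\chi(b)$.

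The remaining task is to compute $\sum_n\trace(\hat\psi^*|_{\HH^*(A\times\genKm)_\chi},q)\,q^{-2n}t^n$ for each fixed $\chi$. Under the identification $\HH^*(A\times\genKm)_\chi\cong\HH^*(\hilbA,\sigma^*\cL_\chi)$, where $\cL_\chi$ is the order-$\abs\chi$ rank-one local system on $A$ attached to $\chi$ (recall that $\nu$ is the pullback of $\nn\colon A\to A$, whose deck group is $G_n=\ker\nn$), the operator $\hat\psi^*$ corresponds to $(\hilb n\psi)^*$. I would evaluate this twisted cohomology by the Göttsche--Soergel/Nakajima description: via Hilbert--Chow and the partition strata of $\symm n A$, a part of size $v$ contributes a copy of $\HH^*(A,\cL_\chi^{\otimes v})$, which vanishes unless $\abs\chi\mid v$, since a nontrivial rank-one local system on a torus has vanishing cohomology. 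Hence only partitions with all parts divisible by $\abs\chi$ survive; re-indexing the parts as $v\abs\chi$ produces the Fock-space product with energies $t^{v\abs\chi}$, each surviving generator being a copy of $\HH^*(A,\IC)$ on which $(\hilb n\psi)^*=h^*$ acts by $\wedge^i\Psi$ in degree $i$. The parity-preserving degree shift of the Nakajima generators, combined with the normalization $q^{-2n}$, collapses the $q$-weight of a degree-$i$ generator to $q^{i-2}$ independently of its energy, yielding $\prod_{v\ge1}\prod_{i=0}^4\det(1-(\wedge^i\Psi)q^{i-2}t^{v\abs\chi})^{(-1)^{i+1}}$.

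Assembling the fixed-character contributions with their phases $\chi(b)$, dividing by $L(\psi,q)$, and repackaging gives the generating identity; the ``in particular'' formula then follows by extracting the coefficient of $t^n$ via $\left.\frac1{n!}\frac{\diff^n}{\diff t^n}\right|_{t=0}$. I expect the main obstacle to be the careful bookkeeping of the translation part: proving rigorously that $\tilde\psi$ and the descending lift $\hat\psi$ differ by the scalar $\chi(b)$ on each isotypic piece, and threading this phase correctly through the twisted Göttsche--Soergel computation, together with the compatibility of $(\hilb n\psi)^*$ with the Nakajima generators under the local-system twist and the vanishing $\HH^*(A,\cL_\chi^{\otimes v})=0$ for $\abs\chi\nmid v$. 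This is exactly the point where the earlier formula went wrong, and hence the heart of the argument.
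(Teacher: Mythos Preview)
Your plan is correct and follows essentially the same route as the paper: pass to the Galois cover $\nu\colon A\times\genKm\to\hilbA$, decompose $\HH^*(A\times\genKm,\IC)$ into $G_n$-isotypic pieces indexed by $\dual{G_n}$, observe that only $h$-fixed characters contribute to the trace, identify each piece with $\HH^*(\hilbA,\hilb n L_\chi)$, and evaluate via the Fock-space description of \cite{NW}. The one cosmetic difference is the bookkeeping of the phase $\chi(b)$. The paper works with the lift $h\times\ps n\psi$ of $\hilb n\psi$ and extracts $\chi(b)$ by an explicit fibrewise computation (their equation involving $\prod_i(f_i\circ\psi)(\bar a+\bar x_i)$), showing that $(\hilb n\psi)^*$ on $L_{\hilbA,\chi}$ corresponds to $\chi(b)\,\hilb n{(\psi^*)}$ on $\hilb n L_\chi$. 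You instead work with the lift $\hat\psi=\psi\times\ps n h$, which differs from the paper's lift by the deck transformation $-b\in G_n$, and read off $\chi(b)$ from the identity $\tilde\psi=(\id_A\times t_b)\circ\hat\psi$ on the Kummer factor. Both are valid; your packaging makes the origin of $\chi(b)$ more transparent, while the paper's explicit construction of the isomorphism $\hilb n L_\chi\xrightarrow{\sim}L_{\hilbA,\chi}$ does the heavy lifting you flag as ``the main obstacle'', namely verifying that under this identification the residual action of $\hat\psi$ really is the untwisted $\hilb n{(\psi^*)}$ compatible with the generators of \cite[Theorem~1.2]{NW}.
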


\begin{proof}
The morphism $\nu$ is not natural with respect to the actions induced by~$\psi$
since
$\nu\circ(\psi\times\ps n \psi)\neq \hilb n \psi\circ\nu$, but  it commutes with
the action of ${h\times\ps n \psi}$ on ${A\times\genKm}$ and of $\hilb n \psi$ on $\hilb n A$;
this is enough for our purpose since ${L(\psi,q)=L(h,q)}$.
Since $\nu$ is a finite morphism, its higher direct images vanish, so the Leray--Serre spectral sequence yields
a canonical isomorphism: 
$$
\HH^\ast(A\times\genKm,\IC)\cong \HH^\ast(\hilb n A,\nu_\ast\underline{\IC}),
$$ 
which is compatible
with the actions of $h\times\ps n \psi$ and  $\hilb n \psi$, where the action of $\hilb n \psi$ on $\nu_\ast\underline{\IC}$
is given at each fiber $\zeta\in\hilb n A$ by: 
$$
(\hilb n \psi)^\ast_\zeta\colon (\nu_\ast\underline{\IC})(\zeta)\to (\nu_\ast\underline{\IC})(\hilb n \psi(\zeta)), \quad f\mapsto f\circ (h\times \ps n \psi).
$$

Since $\nu$ is a Galois covering, it is a classical result that $\nu_\ast\underline{\IC}$ decomposes as a direct sum of character sheaves over the character group $\dual G_n$ of 
$G_n=\Tors_n(A)$:
$$
\nu_\ast\underline{\IC}=\bigoplus_{\chi\in \dual G_n}L_{\hilb n A,\chi}
$$
where $L_{\hilb n A,\chi}$ is the locally constant sheaf on $\hilb n A$ with fibre $\IC$, associated to the character~$\chi$, whose sections over an open set $U\subset\hilb n A$
are the continuous  functions $f\colon \nu^{-1}(U)\to \IC$ (where $\IC$ is given the discrete topology)
such that $f(g\cdot (a,\xi))=\chi(g)f(a,\xi)$ for all $(a,\xi)\in\nu^{-1}(U)$  and $g\in G_n$. It follows 
(see also~\cite[Proposition~18]{Britze}) that:
$$
\HH^\ast(A\times\genKm,\IC)\cong\bigoplus_{\chi\in \dual G_n}\HH^\ast(\hilb n A,L_{\hilb n A,\chi}).
$$

The direct image $\nn_\ast\underline{\IC}$ decomposes 
similarly
as a direct sum of character sheaves over the character group $\dual G_n$ of 
$G_n=\Tors_n(A)$:
$$
\nn_\ast\underline{\IC}=\bigoplus_{\chi\in \dual G_n}L_{A,\chi}
$$
where $L_{A,\chi}$ is the locally constant sheaf on $A$ with fibre $\IC$, associated to the character~$\chi$. Here we make the group $G_n$ act by substraction on $A$, so the morphism $p$ is $G_n$-equivariant.
With this convention, the sections of $L_{A,\chi}$ over an open set $U\subset A$
are the continuous functions $f\colon \nn^{-1}(U)\to \IC$ 
such that $f(g\cdot x)=f(x-g)=\chi(g)f(x)$ for all $x\in\nn^{-1}(U)$ and $g\in G_n$.

Following~\cite{NW}, we denote by $\cL_\chi$ the locally constant sheaf on $A^{(n)}$ such that $\pi^{-1}\cL_\chi\cong L_{A,\chi}\boxtimes\cdots\boxtimes L_{A,\chi}$,
and we put $\hilb n L_\chi\coloneqq \rho^{-1}\cL_\chi$. We construct, for any $\chi\in \dual G_n$, an isomorphism of local systems $\hilb n L_\chi\xrightarrow{\sim} L_{\hilb n A,\chi}$ as follows.
Given a point $\zeta=(\zeta_1,\ldots,\zeta_n)\in\hilb n A$ and functions $f_i\in L_{A,\chi}(\zeta_i)$, $i=1,\ldots,n$ defining a section $f_1\otimes\cdots\otimes f_n\in \hilb n L_\chi(\zeta)$,
we associate a function $f\colon\nu^{-1}(\zeta)\to\IC$ defining a section $f\in  L_{\hilb n A,\chi}(\zeta)$. At a point $(a,\xi)\in\nu^{-1}(\zeta)$, with $\xi=(x_1,\ldots,x_n)$, we have $\zeta=a+\xi$,
$\zeta_i=a+x_i$ for all $i$ and $\sum\limits_{i=1}^n x_i=0$. Choose elements $\bar a, \bar x_1,\ldots,\bar x_{n-1}\in A$ such that $n\bar a =a$, $n\bar x_i=x_i$ for $i=1,\ldots,n-1$ and put
$\bar x_n\coloneqq -\sum\limits_{i=1}^{n-1} \bar x_i\in\nn^{-1}(x_n)$. We define:
$$
f(a,\xi)\coloneqq \prod_{i=1}^n f_i(\bar a+\bar x_i).
$$
It is easy to check that this definition does not depend on the choice of $\bar a,\bar x_1,\ldots,\bar x_{n-1}$, that
$f\in L_{\hilb n A,\chi}(\zeta)$, {\it i.e.}:
$$
f\left(g\cdot(a,\xi)\right)=\chi(g)f(a,\xi)=f(a-g,g+\xi)\quad \forall g\in G_n,
$$
and that this construction defines an isomorphism $\hilb n L_\chi(\zeta)\xrightarrow{\sim} L_{\hilb n A,\chi}(\zeta)$.

Recall that $\hilb n \psi$ acts on $\nu_\ast\underline{\IC}$. It is easy to check that if $f\in L_{\hilb n A,\chi}$ then  $(\hilb n \psi)^\ast(f)=f\circ(h\times\ps n \psi)\in L_{\hilb n A,\chi\circ h}$.
 Let us compute the action induced on $\bigoplus\limits_{\chi\in \dual G_n}\hilb n L_\chi$ by the isomorphisms $\hilb n L_\chi\xrightarrow{\sim} L_{\hilb n A,\chi}$.
At a point $(a,\xi)\in A\times \ps n A$, using the same notation as above, we compute:
\begin{align*}
((\hilb n \psi)^\ast f)(a,\xi)&=f\left(h(a),b+h(\xi)\right)\\
&=f\left(h(a),(b+h(x_1),\ldots,b+h(x_n))\right)\\
&=\prod_{i=1}^{n-1} f_i\left(h(\bar a)+\bar b+h(\bar x_i)\right)\cdot f_n\left(h(\bar a)+\bar b+h(\bar x_n)-b\right)
\end{align*}
where the element $\bar b+h(\bar x_n)-b\in\nn^{-1}(b+h(x_n))$ is chosen according to the construction explained above.
Thus:
\begin{equation}\label{the_key}
((\hilb n \psi)^\ast f)(a,\xi)=\chi(b)\prod_{i=1}^n (f_i\circ\psi)(\bar a +\bar x_i).
\end{equation}
This computation implies the following. Consider the action of the automorphism~$\psi$ on $L_{A,\chi}$ defined at any point $x\in A$ and for any
element $f\in L_{A,\chi}(x)$, by the formula $\psi^\ast f=f\circ\psi$. It is easy to check that,
 if $f\in L_{A,\chi}(x)$, then $\psi^\ast f\in L_{A,\chi\circ h}(h^{-1}(x))$. Let us denote by $\hilb n {(\psi^\ast)}$ the action of $\psi$ induced on $\hilb n L_\chi$, such that:
$$
\hilb n {(\psi^\ast)}(f_1\otimes\cdots\otimes f_n)=\psi^\ast f_1\otimes\cdots\otimes \psi^\ast f_n.
$$
Equation \eqref{the_key} then shows:
$$
(\hilb n \psi)^\ast (f)=\chi(b)\hilb n {(\psi^\ast)} (f) \in L_{\hilb n A,\chi\circ h} \quad \forall f\in L_{\hilb n A,\chi}.
$$
In other words, the action $(\hilb n \psi)^\ast\colon L_{\hilb n A,\chi}\to L_{\hilb n A,\chi\circ h}$ corresponds through the isomorphisms $\hilb n L_\chi\xrightarrow{\sim} L_{\hilb n A,\chi}$ 
to the action $\chi(b)\hilb n {(\psi^\ast)} \colon \hilb n L_\chi\to \hilb n L_{\chi\circ h}$.

Using~\cite[Theorem~1.2]{NW}, we get linear isomorphisms:
\begin{align*}
\HH^\ast(A\times\genKm,\IC[2n])&\cong\bigoplus_{\chi\in \dual G_n}\HH^\ast(\hilb n A,L_{\hilb n A,\chi}[2n])\\
&\cong\bigoplus_{\chi\in \dual G_n}\HH^\ast(\hilb n A,\hilb n L_\chi[2n])\\
&\cong\bigoplus_{\chi\in \dual G_n} \Sym^\ast\left(\bigoplus_{\ell\geq 1}\HH^\ast(A,L_\chi^{\otimes \ell}[2])\right),
\end{align*}
such that the action of $h\times \ps n \psi$ on the left hand side corresponds to the action of $\chi(b)\hilb n {(\psi^\ast)}$ on the second line, and acts on the last line by $\psi^\ast$ on each cohomological group, followed by a multiplication
by $\chi(b)$ on each super-symmetric tensor.

An element of $\HH^k(A\times\genKm,\IC[2n])$ has \emph{cohomological degree} $k-2n$ and \emph{conformal weight} $n$,
an element of $\HH^k(A,L_\chi^{\otimes \ell}[2])$ has \emph{degree} $k-2$ and \emph{weight} $\ell$. Notation~$\Sym^\ast$ denotes the super-symmetric algebra, where
the super-structure concerns only the cohomological degree: the weighting does not interfere with the super-structure. The above isomorphisms respect these bigradings. 

As explained in~\cite[p.~768]{NW}, one has
$\HH^\ast(A,L_\chi^{\otimes \ell}[2])=0$ unless $L_\chi^{\otimes \ell}=\underline{\IC}$.
In order to keep track of the weighting, we work in the space of formal series in the parameter~$t$ encoding the weight, with coefficients
in the space of finitely dimensional graded super-vector spaces. We thus have:
$$
\bigoplus_{\ell\geq 1}\HH^\ast(A,L_\chi^{\otimes \ell}[2]) t^\ell\cong \bigoplus_{\ell\geq 1}\HH^\ast(A,\IC[2])t^{\ell\abs\chi},
$$
where $\abs \chi$ denotes the order of $\chi$ in $\dual G_n$.
We get finally:
$$
\bigoplus_{n\geq 0} \HH^\ast(A\times\genKm,\IC[2n])t^n\cong \bigoplus_{\chi\in \dual G_n} \Sym^\ast\left(\bigoplus_{\ell\geq 1}\HH^\ast(A,\IC[2]) t^{\ell|\chi|}\right).
$$

Let us now compute the formal series of polynomial traces $\sum\limits_{n\geq 0} L(h\times\ps n \psi,q)t^n$ using this isomorphism. First we observe that the action on the right hand side is induced by the natural
action of $\psi^\ast$, but it sends the block indexed by $\chi$ to the block indexed by $\chi\circ h$. So in the computation of the trace, only those characters which are invariant by $h$ contribute to the trace,
we denote them $\chi\in(\dual G_n)^h$. Once the trace of $\psi^\ast$ is computed, it is multiplied by $\chi(b)$ on each block $\chi$ to take into account our previous computation of the action.
Now the trace of $\psi^\ast$ on each super--symmetric block can be computed using standard techniques for traces of natural operators on bigraded super-vector spaces, for which we refer 
to~\cite[Section~3]{Boissiere} and~\cite[Lemma~6]{BNWS_Enriques}. Denoting by $\Psi$ the action of $\psi^\ast$ on $\HH^1(A,\IC)$, and using the fact that $\HH^\ast(A,\IC)=\bigwedge^\ast\HH^1(A,\IC)$,
we obtain:
$$
\sum_{n\geq 0} L(h\times\ps n \psi,q)q^{-2n}t^n=\sum_{\chi \in (\dual G_n)^h} \chi(b)\prod_{v \geq 1}\prod_{i=0}^4\left(\det(1-(\wedge^i\Psi)q^{i-2}t^{v\abs\chi})\right)^{(-1)^{i+1}}.
$$
Since $L(h\times\ps n \psi,q)=L(h,q)\cdot L(\ps n \psi,q)=L(\psi,q)\cdot L(\ps n \psi)$ we get the expected formula.
\end{proof}

\begin{remark}
The isomorphisms of local systems $\hilb n L_\chi\xrightarrow{\sim} L_{\hilb n A,\chi}$ constructed in the proof for any $\chi\in \dual G_n$ can be understood more conceptually as follows.
Since diagram~(\ref{diagr:Kummer}) is $G_n$-equivariant, cartesian, and since $\nn$ is finite and unramified, we have an isomorphism of $G_n$-local systems
(see also~\cite[Proposition~18]{Britze}):
$$
\nu_\ast\underline{\IC}\cong \nu_\ast p^{-1}\underline{\IC}\cong \sigma^{-1}\nn_\ast\underline{\IC}.
$$
Following these isomorphisms, it is easy to see that 
 $L_{\hilb n A,\chi}\cong\sigma^{-1}L_{A,\chi}$ for all $\chi\in \dual G_n$.
 We have $\sigma^{-1}L_{A,\chi}=\rho^{-1}s^{-1}L_{A,\chi}$,
and it is easy to see that:
$$
s^{-1}L_{A,\chi}\cong \cL_\chi,
$$
so finally $L_{\hilb n A,\chi}\cong \sigma^{-1}L_{A,\chi}\cong \rho^{-1}\cL_\chi=\hilb n L_\chi$.
\end{remark}

As a consequence of Proposition~\ref{prop:formule}, evaluating at $q=1$ we recover the formula stated in~\cite[Proposition~7]{BNWS_Enriques}, this time with the missing factors:

\begin{corollary}\label{cor:formula}
Let $\psi\in\Aut(A)$, decomposed as $\psi=t_b\circ h$ with $b\in \Tors_n(A)$, and $\Psi\coloneqq \psi^*\colon \HH^1(A,\IC)\to \HH^1(A,\IC)$. Then:
$$
	\begin{aligned}
		L(\psi)L(\ps n \psi)
		& =  \left.\frac{\diff^n}{n! \diff\! t^n}\right|_{t = 0}
		\sum_{\chi \in (\dual G_n)^h} \chi(b)
		\prod_{v \ge 1}
		\exp\left(\sum_{s \ge 1} \frac{\det(1 - \Psi^s)}{s} t^{v \abs\chi s}\right).
	\end{aligned}
$$
\end{corollary}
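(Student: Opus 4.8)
The plan is to specialize the second (``in particular'') formula of Proposition~\ref{prop:formule} at $q=1$ and then to rewrite the resulting finite product over exterior powers as the exponential of a power sum. Multiplying that formula through by $L(\psi,q)$ gives
$$
L(\psi,q)\,L(\ps n \psi,q)=q^{2n}\left.\frac{\diff^n}{n!\,\diff\! t^n}\right|_{t=0}\sum_{\chi\in(\dual G_n)^h}\chi(b)\prod_{v\geq 1}\prod_{i=0}^4\left(\det(1-(\wedge^i\Psi)q^{i-2}t^{v\abs\chi})\right)^{(-1)^{i+1}}.
$$
For each fixed $n$ only finitely many indices $v$ contribute, so the coefficient of $t^n$ on the right is a Laurent polynomial in $q$; hence extracting the coefficient of $t^n$ commutes with setting $q=1$. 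At $q=1$ one has $q^{2n}=1$, the factors $q^{i-2}$ all disappear, and the polynomial traces specialize to the honest Lefschetz numbers $L(\psi,1)=L(\psi)$ and $L(\ps n\psi,1)=L(\ps n\psi)$. This reduces the statement to the formal identity, to be checked for each factor indexed by $v\geq 1$ (writing $w\coloneqq t^{v\abs\chi}$),
$$
\prod_{i=0}^4\left(\det(1-(\wedge^i\Psi)w)\right)^{(-1)^{i+1}}=\exp\left(\sum_{s\geq 1}\frac{\det(1-\Psi^s)}{s}w^s\right).
$$

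To establish this identity I would take the formal logarithm of the left-hand side and use $\log\det(1-Aw)=-\sum_{s\geq 1}\trace(A^s)w^s/s$ with $A=\wedge^i\Psi$. Functoriality of the exterior power gives $(\wedge^i\Psi)^s=\wedge^i(\Psi^s)$, hence $\trace\big((\wedge^i\Psi)^s\big)=\trace\big(\wedge^i(\Psi^s)\big)$, so that
$$
\log\left(\text{LHS}\right)=\sum_{s\geq 1}\frac{w^s}{s}\sum_{i=0}^4(-1)^i\trace\big(\wedge^i(\Psi^s)\big).
$$
Since $\HH^1(A,\IC)$ has dimension $4$, the inner alternating sum is exactly the characteristic identity $\sum_{i=0}^4(-1)^i\trace(\wedge^i B)=\det(1-B)$, which is the value at $x=1$ of $\det(1-xB)=\sum_i(-1)^i x^i\trace(\wedge^i B)$. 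Substituting $B=\Psi^s$ turns the logarithm of the left-hand side into $\sum_{s\geq 1}\det(1-\Psi^s)w^s/s$, which is precisely the exponent on the right, proving the identity.

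Reassembling, replacing each inner product in the $q=1$ specialization by the exponential just obtained yields
$$
L(\psi)\,L(\ps n\psi)=\left.\frac{\diff^n}{n!\,\diff\! t^n}\right|_{t=0}\sum_{\chi\in(\dual G_n)^h}\chi(b)\prod_{v\geq 1}\exp\left(\sum_{s\geq 1}\frac{\det(1-\Psi^s)}{s}t^{v\abs\chi s}\right),
$$
which is the stated corollary. I expect the only genuinely delicate point to be the bookkeeping that legitimizes passing to $q=1$ termwise: one must note that, although $1/L(\psi,q)$ appears in Proposition~\ref{prop:formule}, after clearing the denominator every coefficient of $t^n$ is a Laurent polynomial regular at $q=1$, so that specialization and coefficient extraction commute. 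The remaining manipulation is the standard ``plethystic'' logarithm computation above, whose key inputs are the multiplicativity $(\wedge^i\Psi)^s=\wedge^i(\Psi^s)$ and the Newton-type identity $\sum_i(-1)^i\trace(\wedge^i B)=\det(1-B)$ in dimension $4$.
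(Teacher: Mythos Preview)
Your proof is correct and follows exactly the route the paper takes: the paper's argument is the single sentence ``evaluating at $q=1$'' applied to Proposition~\ref{prop:formule}, and you have simply made explicit the two points the paper leaves tacit, namely the legitimacy of the specialisation $q\to 1$ after clearing $L(\psi,q)$, and the standard plethystic identity $\prod_{i=0}^4\det(1-(\wedge^i\Psi)w)^{(-1)^{i+1}}=\exp\bigl(\sum_{s\ge 1}\det(1-\Psi^s)w^s/s\bigr)$ in dimension~$4$.
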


\subsection{Application}
\label{ss:appl}

Automorphisms of a two dimensional complex torus~$A$ were classified by Fujiki~\cite{Fujiki}. From this we extract a list of those automorphisms whose action on 
the second cohomology space $\HH^2(A,\IC)$ has prime order, and we consider the automorphisms 
induced on the generalized Kummer fourfold $\ps 3 A$. To get a complete picture, we consider
all automorphisms~$\ps 3\psi$ with $\psi=t_b\circ (\pm h)$, where~$h$ is a group automorphism of prime order on~$A$ and $t_b$~is the translation by a $3$-torsion point $b\in A$. 
As an application of our formula in Proposition~\ref{prop:formule}, we give in each case the topological Lefschetz number and we briefly discuss the fix locus.
Details on the geometric study can be found in~\cite[Section~1.2]{Tari}.
We refer to~\cite[Lemma~1.13]{Tari} for a discussion of the fix loci of the iterates of the automorphisms listed below.
In complement, and in relation to the first part of this note, we refer to~\cite{MTW_Aut} for a lattice-theoretical classification of these automorphisms.

\subsubsection{Type 0} Assume that $h=\id$. Clearly $L(\ps 3 \id)=108$. For $b\neq 0$ we get $L(\ps 3 t_b)=27$, the fix locus consists of $27$~points.

 We have $L(\ps 3 {(-\id)})=60$, the fix locus consists of a copy of~$\ps 2 A$ and $36$ isolated points; the same result holds for $\ps 3 {(-t_b)}$ if $b\neq 0$. 

\subsubsection{Type 1} The torus $A=E\times E'$ is a product of two elliptic curves and ${h=\id_E\times (-\id_{E'})}$. We have $L(\ps 3 h)=12$, the fix locus consists of a copy of $E\times\IP^1$ blown up in $9$ points, three copies of $E^{(2)}$, one copy of $\ps 3 E$ and one copy of $E\times E$. 

Let $b=(u,v)$. If $u=0$ then $L(\ps 3 \psi)=12$ and the fix locus is the same as above. If $u\neq 0$, we have $L(\ps 3\psi)=3$, the fix locus consists of $3$ points.

\subsubsection{Type 2} The torus $A=\frac{E\times E'}{\IZ/2\IZ}$ is the quotient of $E\times E'$ by the translation by a $2$-torsion point $(a,a')$ with $a\in\Tors_2(E)$, $a'\in\Tors_2(E')$, both nonzero, and ${h=\id_E\times (-\id_{E'})}$. We have $L(\ps 3 h)=12$, the fix locus consists of a copy of $A/\langle h\rangle$ blown up in $9$ points, one copy of $E^{(2)}$ and one copy of $\ps 3 E$. 

 Let $b=(u,v)$. If $u=0$ then $L(\ps 3 \psi)=12$ and the fix locus is the same as above. If $u\neq 0$, we have $L(\ps 3\psi)=3$ and the fix locus consists of $3$ points.

\subsubsection{Type 3} The torus is $A=\frac{E\times E'}{(\IZ/2\IZ)^2}$ where $(\IZ/2\IZ)^2\to E\times E'$ is any group monomorphism, and $h=\id_E\times (-\id_{E'})$. We have $L(\ps 3 h)=12$, the fix locus consists of a copy of $A/\langle h\rangle$ blown up in $9$ points and one copy of $\ps 3 E$. 

Let $b=(u,v)$. If $u=0$ then $L(\ps 3 \psi)=12$ and the fix locus is the same as above. If $u\neq 0$, we have $L(\ps 3\psi)=3$ and the fix locus consists of $3$ points.

\subsubsection{Type 4} The torus is $A=E_4\times E_4$ where $E_4=\frac{\IC}{\IZ\oplus\ii\IZ}$, and $h=\begin{pmatrix} \ii &0\\ 0&\ii\end{pmatrix}$. We have  $L(\ps 3 h)=16$, the fix locus consists of four copies of $\IP^1$ and $8$ isolated points.  The same result holds when $h$ is composed by a nonzero  translation.

\subsubsection{Type 5} The torus is $A=E\times E_6$ where $E_6=\frac{\IC}{\IZ\oplus\zeta_6\IZ}$, with $\zeta_n$ is a primitive $n$-th root of  unity, and $h=\begin{pmatrix} 1 &0\\ 0&\zeta_3\end{pmatrix}$. We have  $L(\ps 3 h)=27$, the fix locus consists of nine copies of $\IP^1$, one copy of $E^2$, three copies of $\ps 3 E$ and three copies of $E$. 

Let $b=(u,v)\in\Tors_3(A)$. Denote $\Delta_6$ the image in $E_6$ of the real line $\IR\frac{1+\zeta_6}{3}$. If  $u=0$ and $v\in\Delta_6$, then $L(\ps 3 \psi)=27$, the fix locus consists of nine copies of $\IP^1$, one copy of $E^2$, three copies of $\ps 3 E$ and three copies of $E$. Otherwise $L(\ps 3 \psi)=0$ but the fix locus depends on the translation. If $u=0$ and $v\notin\Delta_6$, it consists of three copies of $E^{(2)}$ and three copies of $E$; if $u\neq 0$ and $v\in\Delta_6$, it consists of three copies of $E_6$; if $u\neq 0$ and $v\notin\Delta_6$ the fix locus is empty: this case already appeared in~\cite[Proposition~8]{BNWS_Enriques} in the context of the construction of Enriques fourfolds. 

We have $L(\ps 3 {(-h)})=9$, the fix locus consists of  one copy of $E$, one copy of $\IP^1$ and $7$ isolated points. The same result holds when $(-h)$ is composed by a nonzero  translation.

\subsubsection{Type 6}  The torus $A=\frac{E\times E_6}{\IZ/3\IZ}$ is the quotient of $E\times E_6$ by the translation by a $3$-torsion point $(a,a')$ with $a\in\Tors_3(E)\setminus\{0\}$ and $a'=\frac{1+\zeta_6}{3}$, and $h=\begin{pmatrix} 1&0\\ 0 & \zeta_3\end{pmatrix}$. We have $L(\ps 3 h)=9$, the fix locus consists of three copies of $\IP^1$, one copy of $\ps 3 E$ and one copy of $E$.

Let $b\in\Tors_3(A)$, it can be written as $b=(u,v)+\frac{t}{3}(a,a')$ with $u\in\Tors_3(E)$, $v\in\Tors_3(E_6)$ and $t\in\{0,1,2\}$. If $t=0$ and $u\in\IZ a$ then 
$L(\ps 3\psi)=9$, the fix locus consists of one copy of $\ps 3 E$, three copies of $\IP^1$ and one copy of $E$. Otherwise $L(\ps 3\psi)=0$ but the fix locus depends on the translation. If $t=0$ and $u\notin\IZ a$, it consists of one copy of $E_6$, if $t\neq 0$ the fix locus is empty.

We have $L(\ps 3 {(-h)})=9$, the fix locus consists of one copy of $\IP^1$ and $7$ isolated points. The same result holds when $(-h)$ is composed by a nonzero  translation.

\subsubsection{Type 7} The torus is $A=E_6\times E_6$ and $h=\begin{pmatrix} \zeta_3 & 0\\ 0&\zeta_3\end{pmatrix}$. Here $L(\ps 3 h)=36$, the fix locus consists of a copy of the minimal resolution of $A/\langle h\rangle$ and $21$ isolated points.

Let $b=(u,v)$. If $b\in \Delta_6\times \Delta_6$, then $L(\ps 3 \psi)=36$ and the fix locus is the same as above, otherwise $L(\ps 3 \psi)=27$ and it consists of nine copies of $\IP^1$ and $9$ isolated points.

We have $L(\ps 3 {(-h)})=12$, the fix locus consists of one copy of $\IP^1$ and $10$ isolated points. The same result holds when $(-h)$ is composed by a nonzero  translation.

\subsubsection{Type 8} The torus is $A=\IC^2/\Lambda$ where $\Lambda$ is the lattice generated by the four vectors:
$$
(1,1),(\zeta_5,\zeta_5^2),(\zeta_5^2,\zeta_5^4),(\zeta_5^3,\zeta_5),
$$
and $h=\begin{pmatrix} \zeta_5 & 0 \\ 0& \zeta_5^2\end{pmatrix}$. We have $L(\ps 3 h)=13$, the fix locus consists of one copy of $\IP^1$ and $11$ isolated points. The same result holds when $h$ is composed by a nonzero  translation.

We have $L(\ps 3 {(-h)})=5$, the fix locus consists of $5$ points. The same result holds when $(-h)$ is composed by a nonzero  translation.

\bibliographystyle{amsplain}
\bibliography{Biblio}

\end{document}